\documentclass[english]{jaums}
\usepackage[T1]{fontenc}
\usepackage[latin9]{inputenc}
\usepackage{graphicx}

\makeatletter
 
\newtheorem{thm}{theorem}

\newtheorem{defn}{definition}
\newtheorem{lem}{lemma}

\usepackage{babel}
\makeatother

\begin{document}

\title{Free actions and Grassmanian variety}

\author{Dr Burzin Bhavnagri}

\begin{abstract}
An algebraic notion of representational consistency is defined. A
theorem relating it to free actions is proved. A metrizability problem
of the quotient (a shape space) is discussed. This leads to a new
algebraic variety with a metrizability result. A concrete example
is given from stereo vision. 
\end{abstract}

\subjclass[2000]{14Q15 (Primary) 53C80, 53C27 (Secondary)}

\maketitle

\section{Introduction}

One of the famous paradoxes of Zeno is Achilles and the tortoise.
This paradox can be resolved by the continuum hypothesis, from which
is proved the principle of mathematical induction. Because of the
principle of induction it is difficult to construct properties which
hold for a set but which no members can possess individually. 

This paper introduces an algebraic notion of representational consistency
from Computer Vision using Shape Spaces \cite{bhav96}. An existence
theorem will be stated. We will construct an example of one of the
existence conditions. But the other existence condition can only be
met if the original set is split. This is intimately connected with
human vision, pattern recognition and neural networks. Even this was
a problem of considerable difficulty since it had been proved there
are no general case view invariants \cite{burns-etal92} as was previously
mentioned in \cite{le-bhavnagri97}. The problem of constructing a
non-trivial example of representational consistency without decomposing
the original set remains unsolved. 

The idea bears a little resemblance to the Chinese Room paradox of
John Searle \cite{searle}. Suppose you are in a room in which you
receive Chinese characters, consult a book containing an English version
of the aforementioned computer program and process the Chinese characters
according to its instructions. You do not understand a word of Chinese;
you simply manipulate what, to you, are meaningless symbols, using
the book and whatever other equipment, like paper, pencils, erasers
and filing cabinets, are available to you. After manipulating the
symbols, you respond to a given Chinese question in the same language.
As the computer passed the Turing test \cite{penrose89,penrose94}this
way, it is fair, says Searle \cite{searle}, to deduce that he has
done so, too, simply by running the program manually. \char`\"{}Nobody
just looking at my answers can tell that I don't speak a word of Chinese\char`\"{}.

The idea we will use is this. The fingers of my hand can represent
one, two, three, four and five; but they can also represent two, one,
three, four and five respectively. Both these representations exist
simultaneously, so we can pair them up to give the pair one,two repeated
twice, and three, four and five, which no longer has the structure
of five elements. From these two representations which exist simultaneously,
a third can be constructed also. But this third representation has
four elements not five, it loses the structure of five elements. 

I call this idea representational consistency both in analogy and
to distinguish it from logical consistency, and because the existence
theorem is based on the idea of representation theory. Since I first
came across an article about the Chinese Room paradox, I felt puzzled
about my own paradox for many years. I had been discovering a number
of new theorems since I was an intern at Monash University with Alan
Pryde. After my honours thesis with John Stillwell (who is well known
for his books like \cite{stillwell80,stillwell92}) I published them
in \cite{bhav94}. These theorems concern a method for representing
shape based on an equivalence relation on polygons \cite{bhav94}.
Now commonly known as a binary string descriptor \cite{tung2000,tung97b,tungetal97,cheng98,kinglau,zles2006}.
My doctoral supervisor Alan Carey noticed a problem, something nobody
else has noticed even in the ensuing decade, and pointed me in the
direction of Algebraic Geometry \cite{acarey91,acarey96b,acarey99b,acarey2000,acarey2006,acarey2007}.
My other doctoral supervisor Michael Brooks introduced me to computer
vision \cite{horn-brooks86,horn-brooks89}, particularly stereo vision.
I then proved the theorem in \cite{bhav95d,bhav95e,bhav97} . Some
years later, in an attempt to generalize the binary string descriptor,
I discovered that the paradox has a mathematical resolution.

\section{Representational consistency}

\begin{defn}
Let $S$ be a set that satisfies a collection of relations, called
the structure of $S$. A \textbf{representation} of $S$ is a one
to one mapping from $S$ to some other space $I$, which preserves
the structure of $S$.
\end{defn}
\begin{equation}
{\displaystyle R:S}\rightarrow I\end{equation}

$\vphantom{}$

The mathematical resolution to the paradox is provided by constructing
a mapping analogous to the many mappings like the Plucker map and
Veronese map that can be found in Algebraic Geometry \cite{harris92}. 

\begin{defn}
Let $R_{1},\: R_{2}$ be two representations of $S$

\begin{equation}
\cup(R_{1},\: R_{2})=\left\{ \left(s,\left\{ R_{1}(s)\right\} \cup\left\{ R_{2}(s)\right\} \right)\mid s\in S\right\} \end{equation}
This defines an operator on mappings whose domain is $S$ but it is
not a linear operator. The representations are pairwise consistent
if for all pairs of representations $R_{1},\: R_{2}$ the map $\cup(R_{1},R_{2})$
is one to one. 
\end{defn}
$\vphantom{}$

The following is a basic existence theorem. The existence follows
by taking the contrapositive.

\begin{thm}
Let the automorphisms of $S$ be denoted $Aut(S)$, and suppose they
are a group under composition, which acts freely on $S$ . If the
representations of S are pairwise inconsistent then $Aut(S)$ has
elements that are involutions.
\end{thm}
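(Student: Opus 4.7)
The plan is to prove the contrapositive: assuming $\mathrm{Aut}(S)$ contains no involutions (of order exactly two), I will show that every pair of representations is consistent. So fix representations $R_{1},R_{2}:S\rightarrow I$ and suppose for contradiction that $\cup(R_{1},R_{2})$ fails to be one-to-one. Reading the definition as the map $s\mapsto\{R_{1}(s)\}\cup\{R_{2}(s)\}$, this gives distinct $s\neq t$ in $S$ with $\{R_{1}(s),R_{2}(s)\}=\{R_{1}(t),R_{2}(t)\}$.

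The first step is a case split. Since each $R_{i}$ is individually one-to-one, the equality $R_{1}(s)=R_{1}(t)$ already forces $s=t$; so that case is impossible. The only surviving case is the ``swap'' $R_{1}(s)=R_{2}(t)$ and $R_{2}(s)=R_{1}(t)$. This is exactly the kind of coincidence the definition of $\cup(R_{1},R_{2})$ is designed to detect.

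Next, I produce an automorphism from this swap. The natural candidate is $\phi:=R_{1}^{-1}\circ R_{2}$, which is a composition of structure-preserving injections and hence a structure-preserving bijection onto its domain of definition, i.e.\ an element of $\mathrm{Aut}(S)$. The two swap equations translate directly to $\phi(t)=s$ and $\phi(s)=t$, so $\phi^{2}(s)=s$. Since $\mathrm{Aut}(S)$ acts freely on $S$, the existence of any fixed point of the group element $\phi^{2}$ forces $\phi^{2}=\mathrm{id}$. And since $\phi(s)=t\neq s$, $\phi$ itself is not the identity. Hence $\phi$ is an involution, contradicting the standing assumption and completing the proof.

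The main obstacle I expect is the well-definedness of $\phi=R_{1}^{-1}\circ R_{2}$ as a global automorphism: strictly, this requires $R_{2}(S)\subseteq R_{1}(S)$. I would handle this either by taking ``representation'' to mean a map onto a fixed model image (so that $R_{1}(S)=R_{2}(S)$ automatically), or by reducing both representations to their common orbit under $\mathrm{Aut}(S)$, exploiting the fact that $R_{1}\circ\psi$ is again a representation for every $\psi\in\mathrm{Aut}(S)$. Either route secures $\phi\in\mathrm{Aut}(S)$, after which the free-action step is immediate. A secondary, minor pitfall is to remember that ``involution'' must mean order \emph{exactly} two, excluding the identity; this is ensured above by the observation $\phi(s)\neq s$.
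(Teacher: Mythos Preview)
Your argument is correct and is essentially the same as the paper's: the paper also reduces to the swap case $\phi_{1}(s)=\phi_{2}(t)$, $\phi_{2}(s)=\phi_{1}(t)$, forms the element $\phi_{1}^{-1}\phi_{2}$, observes that it squares to the identity on the point $s$, invokes freeness to get $(\phi_{1}^{-1}\phi_{2})^{2}=\mathrm{id}$, and notes $\phi_{1}\neq\phi_{2}$ to rule out the identity. The well-definedness issue you flag is real; the paper handles it only implicitly, by treating the representations themselves as elements of $\mathrm{Aut}(S)$ (it writes ``Since $Aut(S)$ is a group'' before inverting), which amounts to your first suggested fix.
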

\begin{proof}
If the representations of $S$ are pairwise inconsistent then for
some $\phi_{1},\phi_{2}$ $\cup(\phi_{1},\phi_{2})$ is not one to
one.

\begin{equation}
\exists s,t\in S\: s\neq t\:\left\{ \phi_{1}(s),\phi_{2}(s)\right\} =\left\{ \phi_{1}(t),\phi_{2}(t)\right\} \end{equation}

Since $\phi_{1},\phi_{2}$ are representations they are one to one
so

\begin{equation}
\phi_{1}(s)\neq\phi_{1}(t)\quad\phi_{2}(s)\neq\phi_{2}(t)\end{equation}

Since $Aut(S)$ is a group we have

\begin{equation}
\phi_{2}^{-1}\left(\phi_{1}(s)\right)=t\end{equation}

\begin{equation}
\phi_{1}^{-1}\left(\phi_{2}(s)\right)=t\end{equation}

Hence $\phi_{1}^{-1}\phi_{2}\phi_{1}^{-1}\phi_{2}(s)=s$

Since $Aut(S)$ acts freely on $S$, $id\in Aut(S)$ $\phi_{1}^{-1}\phi_{2}\phi_{1}^{-1}\phi_{2}=id$

$\vphantom{}$

Since $\cup(\phi_{1},\phi_{2})$ is not one to one and $\phi_{1},\phi_{2}$
are one to one $\phi_{1}\neq\phi_{2}$

\begin{equation}
\left(\phi_{1}^{-1}\phi_{2}\right)^{2}=id\end{equation}
 The group $Aut(S)$ has an involution.
\end{proof}

\section{Group action and non-metrizability}

Astrom \cite{astrom95} had shown there are distinctively shaped curves
which can be mapped arbitrarily close to a circle by projective transformations,
such as sillouettes of a duck and a rabbit. It turned out to be related
to free actions, or the lack thereof. 

The set of $m$ dimensional linear subspaces of $R^{n}$ is denoted
$G(m,n)$ and is called a Grassmanian.

$\vphantom{}$

Let $G$ be the set of $(g,d)$ such that $g\in GL(4)$ and $d$ is
$n\times n$ diagonal.

$\vphantom{}$

The action of $GL(4)\times diag(GL(n))$ is not free

\begin{equation}
A\mapsto gAd\end{equation}

\emph{Example}

\[
A=\left[\begin{array}{cccccc}
1 &  &  &  & 1 & 1\\
 & 1 &  &  & 1 & 2\\
 &  & 1 &  & 1 & 3\\
 &  &  & 1 & 1 & 5\end{array}\right]\]

\[
g=\left[\begin{array}{cccc}
\alpha\\
 & \beta\\
 &  & \gamma\\
 &  &  & \delta\end{array}\right]\]

\[
d=diag(\alpha^{-1},\beta^{-1},\gamma^{-1},\delta^{-1},1,1)\]

\[
gAd=A\]

Michael Murray (who is well known for his book on differential geometry
and statistics \cite{murray93}) brought to my attention the literature
on Shape Spaces. He also suggested a trick by which I might be able
to prove the theorem below. 

\emph{Theorem}

$M=\{m\in M_{4n}\mid\exists4\times4\: minor\neq0\:\forall i\: M_{4i}\neq0\}$

$M/\left(GL(m)\times diag(GL(n))\right)$ is not Hausdorff

Consequently no metric exists on $M/\left(GL(m)\times diag(GL(n))\right)$

\section{Restricted group action and metrizability}

Let $G$ be the set of $(g,d)$ such that

\begin{equation}
g\in GL(4)\quad g_{41}=g_{42}=g_{43}=0\quad\mathrm{and}\quad g_{44}=1\label{eq:4-1}\end{equation}

\begin{equation}
d\in diag(GL(n))\quad n\geq4\label{eq:4-2}\end{equation}

\emph{Theorem}$\quad$If $A\in M$ and $(g,d)\in G$

\begin{equation}
gAd=A\Rightarrow g=Id\:\mathrm{and}\: d=Id\label{eq:4-3}\end{equation}

This restricted group action acts freely.

\begin{equation}
\iota:\widetilde{M}\rightarrow M_{4,n}\quad\left[\begin{array}{cccc}
x & y & z & t\end{array}\right]^{T}\mapsto\left[\begin{array}{cccc}
\frac{x}{z} & \frac{y}{z} & 1 & \frac{t}{z}\end{array}\right]^{T}\label{eq:4-4}\end{equation}

\begin{equation}
\widetilde{M}=\left\{ m\in M\mid\forall i\: m_{3i}\neq0\right\} \label{eq:4-5}\end{equation}

\begin{equation}
\forall g:\widetilde{M}\rightarrow\widetilde{M}\quad\exists d\; g\iota(X)d=\iota(gX)\label{eq:4-6}\end{equation}

\emph{Theorem}$\quad$The quotient of $\iota(\widetilde{M})$ by the
restricted action is a manifold and is metrisable

\emph{Sketch}

$X\in\widetilde{M}$

Since $G$ acts freely and $X\mapsto[\iota(X)]$ is continuous and
open there is a neighborhood of the restricted orbit of $\iota(X)$
homeomorphic to a neighborhood of the orbit of $X$ in $G(3,k-1)$

\section{Non-metrizability theorems}

\begin{defn}
A Lie group $H$ is said to act properly on a manifold $M$ if and
only if, for all compact subsets $K$ contained in $M$, $H_{K}=\{g\in H\mid gK\cap K\neq\emptyset\}$
is relatively compact in $H$.
\end{defn}
\begin{thm}
If $H$ is a Lie group acting on a manifold $M$, then $M/H$ is Hausdorff
if and only if $H$ acts properly on $M$.
\end{thm}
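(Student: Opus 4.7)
The plan is to treat the two implications of the if-and-only-if separately, in each case converting the topological question into a sequential argument on $M$, exploiting first countability of both $H$ and $M$.

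\medskip

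\noindent\emph{Properness implies $M/H$ Hausdorff.} Let $\pi:M\to M/H$ denote the quotient map (which is open because $\pi^{-1}(\pi(U))=HU$ is open for every open $U$). Take $[x]\neq[y]$, so that $Hx\cap Hy=\emptyset$. I would choose precompact open neighbourhoods $U\ni x$ and $V\ni y$ and set $K=\overline{U}\cup\overline{V}$, a compact subset of $M$. Suppose for contradiction that no nested shrinking pair $U_n\subset U$, $V_n\subset V$ of neighbourhoods satisfies $HU_n\cap V_n=\emptyset$. One extracts $g_n\in H$ and $x_n\in U_n$ with $g_nx_n\in V_n$, whence $x_n\to x$ and $g_nx_n\to y$. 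Eventually both $x_n$ and $g_nx_n$ lie in $K$, so $g_n\in H_K$; relative compactness of $H_K$ supplies a subsequence $g_{n_k}\to g\in H$, and continuity of the action then gives $gx=y$, contradicting $Hx\cap Hy=\emptyset$. Hence some $n$ produces $\pi(U_n)$ and $\pi(V_n)$ disjoint and open, separating $[x]$ from $[y]$.

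\medskip

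\noindent\emph{$M/H$ Hausdorff implies properness.} Given compact $K\subset M$, let $\{g_n\}\subset H_K$ and pick $x_n\in K$ with $g_nx_n\in K$. By compactness of $K$, a subsequence satisfies $x_n\to x$ and $g_nx_n\to y$ in $K$. Since $\pi(x_n)=\pi(g_nx_n)$ and $\pi$ is continuous, Hausdorffness of $M/H$ forces $\pi(x)=\pi(y)$, so $y=hx$ for some $h\in H$. To promote this to convergence of a subsequence of $\{g_n\}$ itself, I would invoke a slice at $x$: since $H$ is a Lie group and the orbit $Hx$ is an immersed submanifold, the Palais slice theorem provides an equivariant diffeomorphism between a neighbourhood of $Hx$ and the associated bundle $H\times_{H_x}S$ for a small slice $S$ transverse to the orbit. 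Lifting the convergent sequence $g_nx_n\to hx$ through this local model produces convergence of $g_n$ modulo the isotropy subgroup $H_x$; local compactness of $H_x$ then extracts a genuine convergent subsequence of $\{g_n\}$ in $H$, establishing that $H_K$ is relatively compact.

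\medskip

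\noindent\emph{Main obstacle.} The forward direction is a bookkeeping argument that only uses the defining compactness property of the action. The reverse direction is the technical heart of the result: upgrading orbit-level convergence in $M/H$ to honest convergence in $H$ cannot be done by purely topological means and genuinely uses the Lie group hypothesis through the slice construction. I expect the careful handling of the isotropy $H_x$ in that lifting step (ensuring the ambiguity is absorbed into a compact set) to be where almost all of the work actually sits.
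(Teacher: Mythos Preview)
Your forward direction is correct and follows the same sequential strategy as the paper's, though more carefully executed (the paper's compact set $K=\{x\}\cup\{g_iy\}$ omits $y$ itself, so the claim $g_i\in H_K$ is not actually justified there; your precompact-neighbourhood device avoids this slip).

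The reverse direction cannot be repaired, because the stated equivalence is false. Let $H=\mathbf{R}$ act trivially on any manifold $M$: then $M/H=M$ is Hausdorff, yet for every nonempty compact $K$ one has $H_K=H=\mathbf{R}$, which is not relatively compact. Your argument breaks exactly where it must. The Palais slice theorem takes properness of the action as a \emph{hypothesis}, so invoking it here to deduce properness is circular; and even granting a slice, the step ``local compactness of $H_x$ then extracts a genuine convergent subsequence'' is invalid, since local compactness does not furnish convergent subsequences---compactness does, and in the counterexample $H_x=\mathbf{R}$. For comparison, the paper's argument for this direction is also defective, though in a different place: it asserts that Hausdorffness of $M/H$ lets one cover any compact $K$ by pairwise \emph{disjoint} saturated open sets, one per orbit, and hence that $K$ meets only finitely many orbits. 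That covering claim is false in general (take $S^1$ rotating $S^2$ with $K=S^2$), and the trivial-action counterexample shows no argument can succeed.
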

\begin{proof}
Suppose $H$ acts properly on $M$ but $M/H$ is not Hausdorff. Consider
$x\in M$, and $\{gy\mid g\in H\}\subset M$. If there do not exist
open sets separating them, then $x$ is in the closure of $\{gy\mid g\in H\}$.
So there must be a sequence $g_{i}y$ such that $x$ is a limit point
of the sequence. The set $K=\{x\}\cup\{g_{i}y\mid i=1\ldots\infty\}$
is compact. Since H acts properly $\{g\in H\mid gK\cap K\neq\emptyset\}$
is also compact. However, the sequence $g_{i}$ has no limit, so it
is not closed in $H$. This contradicts the compactness of $H_{K}$,
so $M/H$ must be Hausdorff.

Now suppose $M/H$ is Hausdorff. Suppose also that $K$ is a compact
subset of $M$. Since $M/H$ is Hausdorff, if $x,y\in K$ belong to
different orbits then there are open sets $U,V$ containing $x,y$
respectively, such that $U\cap gV=\emptyset$ for all $g\in H$. Thus
for each subset of $K$ belonging to a single orbit, we obtain an
open set covering it that is disjoint from all other such open sets.
In other words we obtain an open cover of $K$ with disjoint members.
By definition of compactness, this has a finite subcover, which must
be the same cover, because the members were disjoint. Hence $K$ consists
of a finite number of disjoint subsets $K_{i}$ each belonging to
a single orbit of $H$. Each $K_{i}$ must be compact, otherwise the
compactness of $K$ would be contradicted. Since $H_{K}=\{g\in H\mid gK\cap K\neq\emptyset\}$
is a finite union of an intersection of compact sets, it is compact.
It follows that $H$ acts properly on $M$. 
\end{proof}
\begin{lem}
$GL(4)\times diag(GL(n))$ does not act properly on $({\bf R}^{4}\setminus\{0\})^{n}$
\end{lem}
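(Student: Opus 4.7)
\emph{Proof proposal.} The plan is to exhibit a single compact subset $K \subset (\mathbf{R}^4 \setminus \{0\})^n$ for which $H_K$ fails to be relatively compact in $H := GL(4) \times \mathrm{diag}(GL(n))$. The shortcut I would use is that if some point $A$ already has a non-compact stabilizer $H_A$, then the singleton $K = \{A\}$ suffices: indeed $H_K = H_A$, and $H_A$ is automatically closed in $H$ (as the preimage of the closed point $\{A\}$ under the continuous orbit map $h \mapsto hA$), so if it is unbounded then it is not relatively compact. The whole problem therefore reduces to producing one point with unbounded stabilizer.

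Fortunately, the example displayed just before the statement does exactly this for $n = 6$: the $4 \times 6$ matrix $A$ given there is fixed by every pair $(g,d)$ with $g = \mathrm{diag}(\alpha, \beta, \gamma, \delta)$ and $d = \mathrm{diag}(\alpha^{-1}, \beta^{-1}, \gamma^{-1}, \delta^{-1}, 1, 1)$, as $(\alpha, \beta, \gamma, \delta)$ ranges over $(\mathbf{R}^{\ast})^4$. This embeds an unbounded copy of $(\mathbf{R}^{\ast})^4$ inside $H_A$, so combined with the first paragraph it settles $n = 6$. For larger $n$ I would pad $A$ on the right with additional $(1,1,1,1)^T$ columns (each nonzero, so the padded matrix still lies in $(\mathbf{R}^4 \setminus \{0\})^n$) and extend $d$ by trailing $1$'s; the same toral subgroup continues to fix the padded matrix.

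There is no serious obstacle here. The only step worth flagging is the reduction from non-properness to non-compactness of a single stabilizer; once that is noted, the lemma is essentially a repackaging of the computation already performed in the motivating example, and this is consistent with the role the lemma plays in the paper as the concrete motivation for restricting the group action in Section 4.
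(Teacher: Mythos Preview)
Your reduction is sound: if a single point $A$ has a non-compact stabilizer $H_A$, then $K=\{A\}$ already witnesses non-properness, since $H_K=H_A$ is closed and unbounded. This is a cleaner route than the paper's, which instead takes a non-singleton compact set $K=O(4)\times\{p\}$ and, via the singular value decomposition of an arbitrary $g\in GL(4)$ with $gp=p$, manufactures $k\in K$ and $d$ with $gkd\in K$; your approach avoids the SVD entirely.

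However, the specific computation you import from Section~3 is wrong, and you have propagated the error. With $g=\mathrm{diag}(\alpha,\beta,\gamma,\delta)$ and $d=\mathrm{diag}(\alpha^{-1},\beta^{-1},\gamma^{-1},\delta^{-1},1,1)$, the fifth column of $gAd$ is $(\alpha,\beta,\gamma,\delta)^{\top}$, not $(1,1,1,1)^{\top}$, and the sixth column fails similarly. So this pair fixes $A$ only when $\alpha=\beta=\gamma=\delta=1$, and the claimed embedding of $(\mathbf{R}^{\ast})^{4}$ into $H_A$ does not exist. Your padding step for $n>6$ inherits the same mistake, and $n=4,5$ are never treated.

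The repair is immediate and actually simplifies the argument: for \emph{every} $A\in(\mathbf{R}^{4}\setminus\{0\})^{n}$ and every $\lambda\in\mathbf{R}^{\ast}$ one has $(\lambda I_{4})\,A\,(\lambda^{-1}I_{n})=A$, so each stabilizer contains the closed, non-compact subgroup $\{(\lambda I_{4},\lambda^{-1}I_{n}):\lambda\neq 0\}\cong\mathbf{R}^{\ast}$. With this observation your singleton-$K$ strategy works uniformly for all $n$, with no special $A$ and no padding required.
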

\begin{proof}
Let $K=O(4)\times\{p\}\subset M_{4,n}$.\\
 Take any $g\in GL(4)$. It has a singular value decomposition $u_{g}d_{g}v_{g}^{\top}$
with $d_{g}\in diag(GL(4))$.\\
 Take $k=\left[\begin{matrix}v_{g}\end{matrix}\right]$, $v_{g}\in O(4)$,
$d\in diag(d_{g}^{-1},1,\ldots,1)$.\\
 Then $gkd=u_{g}d_{g}v_{g}^{\top}\left[\begin{matrix}v_{g}\end{matrix}\right]d=\left[\begin{matrix}u_{g}\end{matrix}\right]$.\\
 So if $p$ is a unit eigenvector of $g$, $gkd\in O(4)\times\{p\}$.\\
 Hence $H_{K}$ contains the subset of $GL(4)$ fixing $p$, which
is non-compact. 
\end{proof}
Since $GL(4)\times diag(GL(n))$ does not act properly\index{proper action}
on $M$, $M/(GL(4)\times diag(GL(n)))$ is not Hausdorff. Consequently
no metric exists on $M/(GL(4)\times diag(GL(n)))$.

\section{Metrizability theorems}

\vskip 10pt In what follows, $GL(m)$ denotes the set of non-singular
$m\times m$ matrices, $Id$ denotes the identity matrix, $diag()$
denotes a diagonal matrix whose diagonal entries are listed inside
the brackets, and $M_{m,n}$ denotes the set of $m\times n$ matrices.

\vskip 10pt Let $M$ be the set of $4\times n$ matrices which have
at least one non-zero four by four minor and no entry of the fourth
row zero. A matrix in $M$ that has all entries of its fourth row
equal to one represents $n\geq4$ points in ${\bf R}^{3}$ with at
least four points not coplanar.

\vskip 5pt Let $G$ be the set of $(g,d)$ such that $g\in GL(4)$
with $g_{41}=g_{42}=g_{43}=0$ and $g_{44}=1$, and $d$ is $n\times n$
diagonal and non-singular where $n\geq4$. $G$ is a group%
\footnote{ $G$ is in fact a Lie group%
} under the product operation $(g_{1},d_{1})\circ(g_{2},d_{2})=(g_{1}g_{2},d_{1}d_{2})$.

\begin{lem}
If $A\in M$, $(g,d)\in G$ then $gAd=A$ implies $g=Id$ and $d=Id$.
\end{lem}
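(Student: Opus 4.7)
The plan is to exploit the equation $gAd=A$ column by column. Writing $a_j$ for the $j$th column of $A$ and $d = \mathrm{diag}(d_1,\ldots,d_n)$, right-multiplication by $d$ simply rescales the $j$th column by $d_j$, so the identity $gAd=A$ is equivalent to $d_j\,g a_j = a_j$ for every $j$, i.e.\ $g a_j = d_j^{-1} a_j$. This turns the problem into $n$ eigenvector conditions.

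Next I would read off the fourth coordinate of each of these equations. Because the fourth row of $g$ is $(0,0,0,1)$ by the defining conditions (\ref{eq:4-1}), the fourth entry of $g a_j$ is just $A_{4j}$. On the other side the fourth entry of $d_j^{-1} a_j$ is $d_j^{-1} A_{4j}$. Since $A\in M$ has no zero entry in its fourth row, we may cancel $A_{4j}$ and conclude $d_j=1$ for all $j$, so $d=\mathrm{Id}$.

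Once $d=\mathrm{Id}$ is in hand, the original equation collapses to $gA=A$, that is $(g-\mathrm{Id})A=0$. The hypothesis that $A$ has at least one non-zero $4\times 4$ minor means that some four columns of $A$ form an invertible $4\times 4$ matrix $A'$, and $(g-\mathrm{Id})A'=0$ forces $g-\mathrm{Id}=0$. Therefore $g=\mathrm{Id}$ and $d=\mathrm{Id}$, completing the argument.

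There is essentially no obstacle here: the proof is a two-step use of the structural hypotheses on $M$ (non-vanishing fourth row, full $4\times 4$ rank) matched exactly to the structural hypotheses on $G$ (fourth row of $g$ pinned to $(0,0,0,1)$, and $d$ diagonal). The only point requiring a little care is bookkeeping the order of multiplication, since a diagonal matrix acts on columns from the right and so contributes the scalar $d_j$, not $d_j^{-1}$, before one inverts to obtain the eigenvector form.
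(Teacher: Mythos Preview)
Your proof is correct and rests on the same idea as the paper's: read $gAd=A$ column-by-column as eigenvector equations $ga_j=d_j^{-1}a_j$, use the pinned fourth row of $g$ together with $A_{4j}\neq 0$ to force each $d_j=1$, and then invoke the full-rank $4\times 4$ submatrix to conclude $g=\mathrm{Id}$. The paper organizes this slightly differently---it first treats $n=4$ (where $A$ is invertible, so $g=Ad^{-1}A^{-1}$ is diagonalizable with eigenvalue matrix $d^{-1}$) and then reduces $n>4$ to that case via a nonsingular $4\times 4$ minor---but your direct argument across all columns at once is cleaner and avoids the case split.
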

\begin{proof}
First we show this when $n=4$.\\
 Suppose $gAd=A$, where $A,g,d$ satisfy the stated hypotheses.\\
 Then $A$ is invertible since it is $4\times4$ and has a non-zero
$4\times4$ minor.\\
 Multiplying $gAd=A$ by $d^{-1}A^{-1}$ on the right, we obtain $g=Ad^{-1}A^{-1}$,
so $g$ is diagonalizable. \\
 Moreover, $d^{-1}$ is its eigenvalue matrix, and $A$ is an eigenvector
matrix of $g$.\\
 Suppose $v$ and $\lambda$ are such that $gv=\lambda v$.\\
 Then \begin{equation}
\left[\begin{array}{cccc}
g_{41} & g_{42} & g_{43} & 1\end{array}\right]v=\lambda v\label{eq:6-1}\end{equation}
 $\Rightarrow v_{4}=\lambda v_{4}$.\\
 Since columns of $A$ are eigenvectors of $g$ and no fourth row
entry is zero for $A\in M$, $v_{4}\neq0$. Thus $\lambda=1$. So
all eigenvalues of $g$ are equal to $1$. Since $d^{-1}$ is the
eigenvalue matrix of $g$, it is the identity matrix. Since the inverse
of the identity matrix is still the identity matrix, $d=Id$. Hence
also $g=Id$.\\

If $n>4$, $A\in M$ has a non-singular $4\times4$ minor $A_{i}$
whose fourth row has no zero entries, where $i=(i_{1},i_{2},i_{3},i_{4})$.
The columns $i_{1},i_{2},i_{3},i_{4}$ of $gAd$ are $gA_{i}diag(d_{i_{1}},d_{i_{2}},d_{i_{3}},d_{i_{4}})$.
The $n=4$ case implies $g=Id$ and $d_{i_{1}},\ldots,d_{i_{4}}=1$.
Hence also $d=Id$. 
\end{proof}
\vskip 10pt Let $\tilde{M}$ be the subset of $M$ whose elements
(are matrices) with no entry of their third row equal to zero. Let
$\iota$ denote the map from $\tilde{M}$ to $M_{4,n}$ which maps
each column $\left[\begin{array}{cccc}
x & y & z & t\end{array}\right]^{\top}$ to $\left[\begin{array}{cccc}
\frac{x}{z} & \frac{y}{z} & 1 & \frac{t}{z}\end{array}\right]^{\top}$. $\iota$ represents perspective image formation. The fact that no
entry of the third row is zero means that no point in the configuration
represented by a matrix in $\tilde{M}$ lies in the focal plane.

\begin{thm}
\label{igxeqn}If $g\in GL(4)$ with $g_{41}=g_{42}=g_{43}=0$ and
$g_{44}=1$, $X\in\tilde{M}$, $gX\in\tilde{M}$, then there is a
unique non-singular $n\times n$ diagonal matrix $d$ such that \begin{equation}
g\iota(X)d=\iota(gX)\label{eq:6-2}\end{equation}
 
\end{thm}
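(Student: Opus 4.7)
The plan is to observe that $\iota$ has a very simple algebraic form: for any matrix $Y\in\tilde M$, if we set $D_Y=\mathrm{diag}(Y_{31},\dots,Y_{3n})$, then $\iota(Y)=Y\,D_Y^{-1}$. This is because right multiplication by $D_Y^{-1}$ rescales the $j$-th column of $Y$ by $1/Y_{3j}$, which is exactly what $\iota$ does. Under the hypotheses $X\in\tilde M$ and $gX\in\tilde M$, both $D_X$ and $D_{gX}$ are well-defined and non-singular.

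Using this description, I would rewrite the equation $g\iota(X)d=\iota(gX)$ as $g X D_X^{-1} d = (gX)D_{gX}^{-1}$. Since $g(XD_X^{-1})=gXD_X^{-1}$ (matrix multiplication is associative and $D_X^{-1}$ acts on the right), this reduces to $(gX)\bigl(D_X^{-1}d-D_{gX}^{-1}\bigr)=0$. I would then pass to a column-by-column comparison: the $j$-th column of the left-hand side is a scalar multiple of the $j$-th column of $gX$, which is nonzero (its third entry $(gX)_{3j}$ is nonzero because $gX\in\tilde M$). Thus the scalar must vanish, forcing $d_{jj}=X_{3j}/(gX)_{3j}$ for every $j$.

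This simultaneously proves existence, non-singularity, and uniqueness: the values $d_{jj}=X_{3j}/(gX)_{3j}$ are uniquely determined by the equation, they are all nonzero because $X_{3j}\neq 0$ and $(gX)_{3j}\neq 0$, and a direct substitution back into $gXD_X^{-1}d$ yields $gXD_{gX}^{-1}=\iota(gX)$, confirming that this choice of $d$ actually works.

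There is essentially no hard step here; the theorem is a bookkeeping statement about how the scaling in the perspective map $\iota$ interacts with left multiplication by $g$. The only thing to be careful about is justifying the column-wise cancellation without assuming $gX$ is square or invertible, which is handled by noting each column of $gX$ is individually nonzero. The special shape of $g$ (its fourth row being $[0,0,0,1]$) plays no role in this particular theorem; it is the hypotheses $X,gX\in\tilde M$ that do all the work.
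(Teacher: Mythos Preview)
Your argument is correct. The identity $\iota(Y)=Y\,D_Y^{-1}$ with $D_Y=\mathrm{diag}(Y_{31},\dots,Y_{3n})$ reduces the equation to $(gX)(D_X^{-1}d-D_{gX}^{-1})=0$, and since each column of $gX$ is nonzero (its third entry is nonzero by $gX\in\tilde M$), the diagonal difference must vanish entrywise, yielding $d_{jj}=X_{3j}/(gX)_{3j}$ uniquely.

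The paper takes a different route. For existence it expands $g\iota(X)$ and $\iota(gX)$ in coordinates, writes down the same diagonal $d=\mathrm{diag}\bigl(z_j/(g_{31}x_j+g_{32}y_j+g_{33}z_j+g_{34}t_j)\bigr)$, and checks the identity by inspection. For uniqueness it does not argue columnwise; instead it invokes the earlier free-action lemma: if some $(g',d')$ also satisfied the equation, then $(g^{-1}g',\,d'd^{-1})$ would fix $\iota(X)\in M$, forcing $g'=g$ and $d'=d$. Your approach is more self-contained and more informative: it handles existence and uniqueness in a single stroke, avoids coordinate expansion, and, as you point out, makes visible that the affine constraint on the fourth row of $g$ is not actually used here---only the hypotheses $X,gX\in\tilde M$ are needed. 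The paper's route, on the other hand, ties the uniqueness statement back to the free-action result that drives the rest of the section, which is structurally convenient for what follows even if it is logically heavier than necessary for this particular theorem.
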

\begin{proof}
Let \begin{equation}
g=\left[\begin{array}{cccc}
g_{11} & g_{12} & g_{13} & g_{14}\\
g_{21} & g_{22} & g_{23} & g_{24}\\
g_{31} & g_{32} & g_{33} & g_{34}\\
0 & 0 & 0 & 1\end{array}\right]\label{eq:6-3}\end{equation}
 be an affine transformation.

\begin{equation}
X=\left[\begin{array}{ccc}
x_{1} & \cdots & x_{n}\\
y_{1} & \cdots & y_{n}\\
z_{1} & \cdots & z_{n}\\
t_{1} & \cdots & t_{n}\end{array}\right]\in\tilde{M}\label{eq:6-4}\end{equation}
so $z_{1},\ldots,z_{n}\neq0$, and $X$ is not contained in a plane.

\begin{equation}
g\iota(X)=\left[\begin{array}{ccc}
(g_{11}x_{1}+g_{12}y_{1}+g_{13}z_{1}+g_{14}t_{1})/z_{1} & \cdots & (g_{11}x_{1}+g_{12}y_{1}+g_{13}z_{1}+g_{14}t_{1})/z_{n}\\
(g_{21}x_{1}+g_{22}y_{1}+g_{23}z_{1}+g_{24}t_{1})/z_{1} & \cdots & (g_{21}x_{1}+g_{22}y_{1}+g_{23}z_{1}+g_{24}t_{1})/z_{n}\\
(g_{31}x_{1}+g_{32}y_{1}+g_{33}z_{1}+g_{34}t_{1})/z_{1} & \cdots & (g_{31}x_{1}+g_{32}y_{1}+g_{33}z_{1}+g_{34}t_{1})/z_{n}\\
t_{1}/z_{1} & \cdots & t_{n}/z_{n}\end{array}\right]\label{expandgix}\end{equation}

\begin{equation}
\iota(gX)=\left[\begin{array}{ccc}
\frac{g_{11}x_{1}+g_{12}y_{1}+g_{13}z_{1}+g_{14}t_{1}}{g_{31}x_{1}+g_{32}y_{1}+g_{33}z_{1}+g_{34}t_{1}} & \cdots & \frac{g_{11}x_{n}+g_{12}y_{n}+g_{13}z_{n}+g_{14}t_{n}}{g_{31}x_{1}+g_{32}y_{1}+g_{33}z_{1}+g_{34}t_{1}}\\
\frac{g_{21}x_{1}+g_{22}y_{1}+g_{23}z_{1}+g_{24}t_{1}}{g_{31}x_{1}+g_{32}y_{1}+g_{33}z_{1}+g_{34}t_{1}} & \cdots & \frac{g_{21}x_{n}+g_{22}y_{n}+g_{23}z_{n}+g_{24}t_{n}}{g_{31}x_{n}+g_{32}y_{n}+g_{33}z_{n}+g_{34}t_{n}}\\
1 & \cdots & 1\\
\frac{t_{1}}{g_{31}x_{1}+g_{32}y_{1}+g_{33}z_{1}+g_{34}t_{1}} &  & \frac{t_{n}}{g_{31}x_{n}+g_{32}y_{n}+g_{33}z_{n}+g_{34}t_{n}}\end{array}\right]\label{expandigx}\end{equation}

Let \begin{equation}
d=diag(z_{1}/(g_{31}x_{1}+g_{32}y_{1}+g_{33}z_{1}+g_{34}t_{1}),\ldots,z_{n}/(g_{31}x_{n}+g_{32}y_{n}+g_{33}z_{n}+g_{34}t_{n}))\label{eq:6-7}\end{equation}
 Since the third rows of $X$ and $gX$ have no non-zero entries,
d is non-singular. Multiplying the right hand side of equation \ref{expandgix}
by $d$ we see that it equals the right hand side of equation \ref{expandigx}.
Thus $g\iota(X)d=\iota(gX)$ which proves existence.\\

Suppose $(g^{\prime},d^{\prime})\in G$ such that $g^{\prime}\iota(X)d^{\prime}=\iota(gX)$.
Then $g^{\prime}\iota(X)d^{\prime}=\iota(gX)=g\iota(X)d$. Unless
$g^{\prime}=g$ and $d^{\prime}=d$, $(g^{\prime}g^{-1},d^{\prime}d^{-1})$
fixes $\iota(X)$. $\iota(X)=X\lambda$ for some $\lambda\in diag(M_{n,n})$.
Since $X\in\tilde{M}$ its third row has no zero entries, so $\lambda$
is non-singular, so $X\lambda\in M$, ie $\iota(X)\in M$. By lemma
2.2, $g^{\prime}=g$ and $d^{\prime}=d$.\\
 Thus $g$ and $d$ are unique. 
\end{proof}
Consider the following action of the group $G$ on $\iota(\tilde{M})$:
\begin{equation}
\theta:G\times\iota(\tilde{M})\rightarrow\iota(\tilde{M})\quad\theta((g,d),\iota(X))=g\iota(X)d\label{eq:6-8}\end{equation}

We will restrict $\theta$ to those elements of $G\times\iota(\tilde{M})$
that satisfy $g\iota(X)d=\iota(gX)$ and $gX\in\tilde{M}$. We will
call this the restricted action of $G$.

\vskip 10pt Suppose the fourth coordinates $t_{i}$ of all scene
points equal $1$. Then $\iota$ becomes invertible.

\begin{lem}
Let $\iota(X)\sim\iota(Y)$ if there is a $g\in GL(4)$ such that
$g_{41}=g_{42}=g_{43}=0$, $g_{44}=1$ and $\iota(X)=\iota(gY)$,
where $X,Y\in\iota(\tilde{M})$. This is an equivalence relation on
$\iota(\tilde{M})$.
\end{lem}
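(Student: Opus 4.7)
The approach is to verify reflexivity, symmetry, and transitivity in turn on $\iota(\tilde{M})$, using Theorem~\ref{igxeqn} as the bridge between the action of an affine $g$ on $\tilde{M}$ and its induced action on $\iota(\tilde{M})$. Reflexivity is immediate with $g = Id$, since $Id$ satisfies $g_{41}=g_{42}=g_{43}=0$, $g_{44}=1$ and $\iota(X) = \iota(Id\cdot X)$.

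For symmetry, the essential point is that the set of $g\in GL(4)$ whose last row is $(0,0,0,1)$ is closed under inversion --- it is the group of affine maps in homogeneous form, and a block computation shows $g^{-1}$ has the same shape. Given $\iota(X) = \iota(gY)$, Theorem~\ref{igxeqn} applied to $(g,Y)$ produces a unique diagonal $d$ with $g\iota(Y)d = \iota(gY) = \iota(X)$, so $\iota(Y) = g^{-1}\iota(X)d^{-1}$. I would then verify $g^{-1}X \in \tilde{M}$: the fourth row is preserved because the last row of $g^{-1}$ is $(0,0,0,1)$, a nonzero $4 \times 4$ minor persists since $g^{-1}$ is invertible, and the third row condition is extracted from the fact that $\iota(Y) = g^{-1}\iota(X)d^{-1}$ has third row identically $1$ (unwinding $\iota(X) = X\mu$ for a diagonal nonsingular $\mu$ then forces the third row of $g^{-1}X$ to be nowhere zero). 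A second invocation of Theorem~\ref{igxeqn}, now for $(g^{-1},X)$, yields $\iota(g^{-1}X) = g^{-1}\iota(X)d'$ for some unique diagonal $d'$; comparing with the previous formula forces $d' = d^{-1}$, so $\iota(Y) = \iota(g^{-1}X)$.

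For transitivity, suppose $\iota(X) = \iota(gY)$ and $\iota(Y) = \iota(hZ)$. The composite $gh$ still has last row $(0,0,0,1)$, and two applications of Theorem~\ref{igxeqn} give $\iota(X) = g\iota(Y)d_1 = gh\iota(Z)d_2 d_1$. After verifying $ghZ \in \tilde{M}$ by the same three checks as for $g^{-1}X$ (with nonsingularity now coming from $gh$ being a product of invertibles), Theorem~\ref{igxeqn} applied to $(gh,Z)$ supplies a unique $d_3$ with $\iota(ghZ) = gh\iota(Z)d_3$, and uniqueness forces $d_3 = d_2 d_1$, whence $\iota(ghZ) = \iota(X)$.

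I expect the main obstacle to be not the algebraic core --- which reduces cleanly to the uniqueness clause of Theorem~\ref{igxeqn} --- but the bookkeeping to keep every matrix to which $\iota$ is applied inside $\tilde{M}$. The fourth row and $4\times 4$ minor conditions are stable under the affine group by direct inspection; the delicate one is the nowhere-vanishing third row, and in each case the cleanest way to obtain it is to read it off a normalization identity whose left-hand side has third row identically $1$. Once this bookkeeping is uniform, well-definedness of $\sim$ on $\iota(\tilde{M})$ (independence of the preimage in $\tilde{M}$ chosen for a given element) is automatic, since any change of preimage amounts to a column rescaling that is absorbed into the diagonal $d$ supplied by Theorem~\ref{igxeqn}.
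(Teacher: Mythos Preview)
Your argument is essentially correct, but it is considerably more elaborate than what the paper does. Immediately before this lemma the paper imposes the standing simplification that all fourth coordinates $t_i$ equal $1$, under which $\iota$ becomes invertible on its image. With that in hand, the paper's proof is a two-liner: from $\iota(X)=\iota(gY)$ one reads off, columnwise, that $X=gY$ (the fourth row of $\iota$ recovers the third row of the argument, and then the first two rows follow), and since the set of affine $g$ with last row $(0,0,0,1)$ is a group, the relation is an equivalence relation.

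You instead work without the $t_i=1$ assumption, routing everything through Theorem~\ref{igxeqn} and the normalization ``third row of $\iota(\cdot)$ is identically $1$'' to force the various diagonal matrices to match. That is a legitimate and more general argument, and your bookkeeping for keeping $g^{-1}X$ and $ghZ$ inside $\tilde{M}$ is sound. One small point: when you say ``uniqueness forces $d'=d^{-1}$'', what you are really using is not the uniqueness clause of Theorem~\ref{igxeqn} per se, but the observation that two matrices in $\iota(\tilde{M})$ differing by a right diagonal factor must coincide (since both have third row identically $1$); equivalently this is the free-action lemma applied to $\iota(X)\in M$. Once that is said, your symmetry and transitivity steps go through. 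The trade-off is that the paper's route is a single injectivity observation plus the group property, at the cost of an extra normalizing hypothesis, while yours avoids that hypothesis at the cost of repeated appeals to Theorem~\ref{igxeqn} and membership checks in $\tilde{M}$.
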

\begin{proof}
Suppose $\iota(X)=\iota(gY)$, so for each $i$ \begin{equation}
Y_{1i}/Y_{3i}=(gX)_{1i}/(gX)_{3i}\label{eq:6-9a}\end{equation}
 \begin{equation}
Y_{2i}/Y_{3i}=(gX)_{2i}/(gX)_{3i}\label{eq:6-9b}\end{equation}
 \begin{equation}
1/Y_{3i}=1/(gX)_{3i}\label{eq:6-9c}\end{equation}
 Thus $Y=gX$. Since the set of $g\in GL(4)$ such that $g_{41}=g_{42}=g_{43}=0$
and $g_{44}=1$ are a group, $\sim$ is an equivalence relation. 
\end{proof}
\vskip 10pt We can therefore see that the restricted group action
partitions $\iota(\tilde{M})$ into equivalence classes. Thus there
is a quotient space $\iota(\tilde{M})/\sim$. We will introduce two
lemmas from \cite{boothby75} to prove that this quotient space is
a manifold.

\begin{defn}
An equivalence relation on a space $X$ is called open if whenever
a subset $A\subset X$ is open, then $[A]$ is also open.
\end{defn}
\begin{lem}
\label{openrel}The equivalence relation associated with the restricted
group action is open.
\end{lem}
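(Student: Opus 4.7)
The plan is to exhibit $[A]$ as a union of images of $A$ under homeomorphisms (of appropriate open subsets of $\iota(\tilde{M})$). Since an arbitrary union of open sets is open, it will then suffice to show each image is open.

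First I would unpack what $[A]$ means. By the previous lemma, $\iota(Y)\sim \iota(X)$ iff there is an admissible $g\in GL(4)$ (with $g_{41}=g_{42}=g_{43}=0$, $g_{44}=1$) such that $Y=gX$ and $gX\in \tilde{M}$. Equivalently,
\begin{equation}
[A]=\bigcup_{g}\theta_{g}\bigl(A\cap U_{g}\bigr),
\end{equation}
where $\theta_{g}(\iota(X)):=\iota(gX)$ and $U_{g}:=\{\iota(X)\in\iota(\tilde{M}):gX\in\tilde{M}\}$. The unique $d\in diag(GL(n))$ guaranteed by Theorem \ref{igxeqn} makes $\theta_{g}(\iota(X))=g\iota(X)d$, so $\theta_g$ really is the restricted action by this particular $g$.

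Next I would check that each $U_{g}$ is open in $\iota(\tilde{M})$ and that $\theta_{g}:U_{g}\to\iota(\tilde{M})$ is an open map. Openness of $U_{g}$ follows because the defining conditions (``the third row of $gX$ has no zero entries'' and ``some $4\times 4$ minor is nonzero'') cut out the complement of a finite union of zero loci of polynomials in the entries of $X$, hence the complement of a closed set. For $\theta_{g}$, I would invoke the paragraph preceding the lemma: when the fourth coordinates of scene points are normalized to $1$, $\iota$ is bijective with continuous inverse, and the left-multiplication $L_{g}:X\mapsto gX$ on $M_{4,n}$ is a linear isomorphism, hence a homeomorphism. Therefore $\theta_{g}=\iota\circ L_{g}\circ\iota^{-1}$ is a homeomorphism between the open sets $U_{g}$ and $\theta_{g}(U_{g})$, and in particular sends open subsets of $U_{g}$ to open subsets of $\iota(\tilde{M})$.

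Putting this together, for any open $A\subset\iota(\tilde{M})$, each $A\cap U_{g}$ is open in $U_{g}$, each $\theta_{g}(A\cap U_{g})$ is open in $\iota(\tilde{M})$, and $[A]$ is a union of such sets. The main bookkeeping obstacle is that the restricted action is only partially defined (requiring $gX\in\tilde{M}$); but since openness is a local property and each $\theta_{g}$ is handled on its own open domain $U_{g}$, the partiality never bites. I do not expect any deep difficulty here---the lemma is essentially the general fact that the orbit map of a (partial) continuous group action by homeomorphisms is an open map.
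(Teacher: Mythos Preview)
Your argument is correct and in fact tidier than the paper's own. Both proofs rest on the same underlying idea---express $[A]$ as a union, over admissible $g$, of homeomorphic images of pieces of $A$---but the paper reaches that point indirectly: it first argues that the map $(g,X)\mapsto d$ from Theorem~\ref{igxeqn} is surjective onto nonsingular diagonals, then that it is continuous, and from this concludes that the restricted action is a continuous map defined on an open subset of $G\times\iota(\tilde{M})$, whence openness of $\sim$ follows by the general principle for continuous group actions. Your decomposition $[A]=\bigcup_{g}\theta_{g}(A\cap U_{g})$ with $\theta_{g}=\iota\circ L_{g}\circ\iota^{-1}$ short-circuits this: once one accepts (as the paper does just before the lemma) that $\iota$ is invertible with continuous inverse, each $\theta_{g}$ is visibly a homeomorphism between open sets, and the surjectivity of $d$ plays no role. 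Your explicit handling of the partiality of the action via the open domains $U_{g}$ is also clearer than the paper's treatment, which leaves that point implicit.
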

\begin{proof}
Suppose $g_{31}=0,g_{32}=1,g_{33}=g_{34}=0$. Then $d$ is of the
form ${\rm diag}(v/w)$ for vectors $v,w$, where division is componentwise,
and $v\neq\lambda w$ for all non-zero $\lambda$. All $d\neq{\rm diag}(\lambda,\ldots,\lambda)$
can be realized like this. If $g_{31}=1/\lambda,g_{32}=g_{33}=g_{34}=0$
then $d={\rm diag}(\lambda,\ldots,\lambda)$. Hence $d$ is surjective.
Let $D$ be any open subset of the set of non-singular diagonal matrices.
Since $d$ in theorem \ref{igxeqn} is a continuous function of $g$
and $X$ when $gX\in\tilde{M}$, the preimage of $D$ is an open set
of $(g,X)$. Thus the restriction is an open subset of $G\times\iota(\tilde{M})$,
and the restricted group action is continuous. It follows from this
that the equivalence relation is open. 
\end{proof}
\begin{lem}
\label{qbasis}An equivalence relation on $X$ is open if and only
if the map $\pi:X\rightarrow X/\sim,\, X\mapsto[X]$ is open. When
$\sim$ is open and $X$ has a countable basis of open sets, then
$X/\sim$ has a countable basis also.
\end{lem}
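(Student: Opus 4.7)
The plan is to reduce both assertions to the identity $\pi^{-1}(\pi(A)) = [A]$, where $[A] = \bigcup_{a \in A}[a]$ denotes the saturation of $A$ under $\sim$. This identity is immediate: a point $x$ lies in $\pi^{-1}(\pi(A))$ exactly when $\pi(x) = \pi(a)$ for some $a \in A$, that is, when $x$ is equivalent to some element of $A$.

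For the first claim, I will use the definition of the quotient topology: a set $U \subset X/\sim$ is open if and only if $\pi^{-1}(U)$ is open in $X$. Applying this with $U = \pi(A)$ for an arbitrary open $A \subset X$, the image $\pi(A)$ is open in $X/\sim$ precisely when $\pi^{-1}(\pi(A)) = [A]$ is open in $X$. So the statement ``$\pi(A)$ is open for every open $A$'' (i.e.\ $\pi$ is an open map) is literally equivalent to ``$[A]$ is open for every open $A$'' (i.e.\ $\sim$ is an open equivalence relation). This gives the biconditional with essentially no computation.

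For the second claim, I will take a countable basis $\{B_n\}_{n\in\mathbb{N}}$ of $X$ and show that $\{\pi(B_n)\}_{n\in\mathbb{N}}$ is a countable basis of $X/\sim$. Each $\pi(B_n)$ is open because $\pi$ is open by the first part. To check the basis property, let $V \subset X/\sim$ be open; then $\pi^{-1}(V)$ is open in $X$, so it can be written as a union $\bigcup_{i} B_{n_i}$ of basic open sets. Applying $\pi$ and using its surjectivity to conclude $\pi(\pi^{-1}(V)) = V$, I obtain $V = \bigcup_i \pi(B_{n_i})$, exhibiting $V$ as a union of members of $\{\pi(B_n)\}$.

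There is essentially no hard step here; the lemma is a routine consequence of the definition of the quotient topology. The only point that needs to be stated cleanly is the saturation identity $\pi^{-1}(\pi(A)) = [A]$, together with the surjectivity of $\pi$ used to recover $V$ from $\pi^{-1}(V)$. Once these are in hand, both halves follow in one line each.
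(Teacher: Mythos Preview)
Your proof is correct and entirely standard; the saturation identity $\pi^{-1}(\pi(A))=[A]$ together with the definition of the quotient topology is exactly what is needed, and your second-countability argument via pushing forward a basis is the usual one. The paper itself does not prove this lemma but simply cites Boothby, so your write-up supplies precisely the routine argument that reference contains.
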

\begin{proof}
See lemma 2.3 in \cite{boothby75}. 
\end{proof}
\begin{lem}
\label{qhausdorff}Let $\sim$ be an open equivalence relation on
a topological space $X$. Then $R=\{(x,y)\mid x\sim y\}$ is a closed
subset of the space $X\times X$ if and only if the quotient space
$X/\sim$ is Hausdorff.
\end{lem}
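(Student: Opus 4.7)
The plan is to prove both directions by characterising the complement of $R$ in $X\times X$ via the product topology, and converting between "disjoint in $X/\sim$" and "no related pair" using the facts that $\pi:X\to X/\sim$ is continuous (always) and open (by Lemma \ref{qbasis}, since $\sim$ is open).

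For the forward direction, I would assume $R$ is closed and show that any two distinct classes $[x]\neq[y]$ can be separated in $X/\sim$. Since $[x]\neq[y]$ we have $(x,y)\notin R$, and because $R$ is closed there is a basic product-open neighbourhood $U\times V$ of $(x,y)$ disjoint from $R$. I would then form $\pi(U)$ and $\pi(V)$; these are open in $X/\sim$ because $\pi$ is an open map by Lemma \ref{qbasis}. The key verification is that $\pi(U)\cap\pi(V)=\emptyset$: if some class $[z]$ lay in both, pick $u\in U$ and $v\in V$ with $\pi(u)=\pi(v)=[z]$, so $u\sim v$, giving $(u,v)\in R\cap(U\times V)$, a contradiction. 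Since $[x]\in\pi(U)$ and $[y]\in\pi(V)$ this shows $X/\sim$ is Hausdorff.

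For the reverse direction, I would assume $X/\sim$ is Hausdorff and show the complement of $R$ is open in $X\times X$. Take any $(x,y)\notin R$; then $[x]\neq[y]$, so by the Hausdorff property there exist disjoint open sets $A,B\subset X/\sim$ with $[x]\in A$, $[y]\in B$. By continuity of $\pi$, the sets $\pi^{-1}(A)$ and $\pi^{-1}(B)$ are open in $X$, so $\pi^{-1}(A)\times\pi^{-1}(B)$ is an open neighbourhood of $(x,y)$ in $X\times X$. It is disjoint from $R$: any $(u,v)$ in it with $u\sim v$ would have $[u]=[v]\in A\cap B=\emptyset$. Hence every point of $(X\times X)\setminus R$ has an open neighbourhood in $(X\times X)\setminus R$, so $R$ is closed.

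There is no real obstacle here; the only conceptual point to highlight is that the forward direction genuinely requires $\pi$ to be open (not merely continuous), which is exactly why the hypothesis that $\sim$ is an open equivalence relation is invoked, while the reverse direction uses only continuity of $\pi$ and so would hold for arbitrary equivalence relations. Everything else reduces to unwinding the definitions of the product topology and of $R$.
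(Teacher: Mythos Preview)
Your argument is correct and is the standard proof of this fact; in particular your observation that only the forward implication uses openness of $\pi$ while the reverse uses mere continuity is exactly right. The paper itself does not supply a proof here but simply cites Boothby (Lemma~2.4), whose argument is essentially the one you have written out.
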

\begin{proof}
See lemma 2.4 in \cite{boothby75}. 
\end{proof}
\begin{lem}
\label{pairsclosed}The set \[
\{(\iota(X),\iota(Y))\mid\exists g\,\iota(X)=\iota(gY)\, X,gY\in\tilde{M}\}\]
 is a closed subset of $\iota(\tilde{M})\times\iota(\tilde{M})$.
\end{lem}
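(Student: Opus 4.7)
The plan is to prove closedness by a sequential argument. Suppose $(p_n, q_n) \in R$ with $p_n \to p^*$ and $q_n \to q^*$ in $\iota(\tilde{M}) \times \iota(\tilde{M})$; I must exhibit a witness placing $(p^*, q^*)$ in $R$. First I would reformulate $R$. By Theorem \ref{igxeqn}, the condition ``$\exists g\in GL(4)$ with $g_{4j}=\delta_{j4}$ such that $\iota(X)=\iota(gY)$ and $gY\in\tilde{M}$'' is equivalent to ``$\exists (g,d)\in G$ with $\iota(X)=g\iota(Y)d$''. The forward direction is immediate from the theorem; for the converse, matching third rows of $g\iota(Y)d = \iota(X)$ forces $d_{ii}=Y_{3i}/(gY)_{3i}$, which is exactly the diagonal matrix produced by Theorem \ref{igxeqn}, so $g\iota(Y)d=\iota(gY)$, while the same formula guarantees $(gY)_{3i}\neq 0$ and hence $gY\in\tilde{M}$. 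It therefore suffices to show that $R=\{(p,q): \exists (g,d)\in G,\, p=gqd\}$ is closed.

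Pick $(g_n,d_n)\in G$ with $p_n=g_n q_n d_n$. Since the last row of $g_n$ is $(0,0,0,1)$, the fourth row of this equation reduces to $(p_n)_{4i}=(q_n)_{4i}(d_n)_{ii}$. Because $q^*\in\iota(\tilde{M})$ has $(q^*)_{4i}\neq 0$, we obtain $(d_n)_{ii}\to (p^*)_{4i}/(q^*)_{4i}=:(d^*)_{ii}$, and $d^*$ is nonsingular diagonal. Next, select columns $i_1,\dots,i_4$ at which $q^*$ has a nonzero $4\times 4$ minor; the corresponding minor of $q_n$ is then nonzero for all large $n$. Restricting $p_n=g_n q_n d_n$ to these columns gives $g_n=P_n^{(i)}(D_n^{(i)})^{-1}(Q_n^{(i)})^{-1}$, which converges to $g^*:=P^{*(i)}(D^{*(i)})^{-1}(Q^{*(i)})^{-1}$. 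Continuity yields $p^*=g^*q^*d^*$, and the last-row constraint $g^*_{4j}=\delta_{j4}$ is inherited in the limit.

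The main obstacle is verifying that $g^*\in GL(4)$, since limits of invertible matrices may degenerate. To handle this I would exploit that $p^*\in\iota(\tilde{M})$ possesses some nonzero $4\times 4$ minor on columns $J=(j_1,\dots,j_4)$. Taking determinants in the $J$-column restriction of $p_n=g_n q_n d_n$ and passing to the limit gives
\begin{equation}
\det P^{*(J)}=\det(g^*)\det Q^{*(J)}\prod_{k=1}^{4}(d^*)_{j_k j_k}.
\end{equation}
Since $\det P^{*(J)}\neq 0$ and $\prod_k (d^*)_{j_k j_k}\neq 0$, it follows that $\det g^*\neq 0$ (and incidentally that $q^*$ shares the same nonzero minor). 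Hence $(g^*,d^*)\in G$ and $p^*=g^*q^*d^*$, so by the reformulation $(p^*,q^*)\in R$, completing the proof.
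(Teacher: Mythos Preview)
Your argument is correct and takes a genuinely different route from the paper. The paper unwinds the relation $\iota(gX)=\iota(Y)$ into three families of linear equations in the unknown rows of $g$, and then argues closedness via rank conditions: the relevant coefficient matrix (with rows $(x_it_i',y_it_i',z_it_i',t_it_i',t_ix_i')$ and its analogues) has rank exactly $4$ in the limit because rank $\geq 4$ is forced by membership in $\tilde M$ while the vanishing of all $5\times 5$ minors is a closed condition. Nonsingularity of the limiting $g$ is then deduced indirectly from $Y\in\tilde M$. By contrast, you never write out these equations: you reformulate the relation via Theorem~\ref{igxeqn} as $p=gqd$, extract the diagonal witnesses $d_n$ explicitly from the fourth-row identity $(p_n)_{4i}=(q_n)_{4i}(d_n)_{ii}$, recover $g_n$ by inverting a $4\times4$ block of $q_n$ chosen to be nonsingular at the limit, and pass to the limit directly; invertibility of $g^*$ then drops out of a single determinant identity on a well-chosen $4\times4$ minor of $p^*$. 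Your approach is shorter and more transparent about \emph{why} the witnesses converge, while the paper's rank-and-minors argument is closer in spirit to showing that the relation is cut out by polynomial equations (an algebraic-variety viewpoint). Both yield the same conclusion; yours has the practical advantage of isolating the role of the affine constraint $g_{4j}=\delta_{j4}$, which is exactly what pins down $d_n$ and prevents the witnesses from escaping to infinity.
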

\begin{proof}

Since $\iota^{2}=\iota$, we can rewrite an equation of the form $\iota(gX)=\iota(Y)$
into the form $\iota(gX)=Y$, where $Y_{3i}=1$. Then $gX_{4i}=X_{4i}$
so $\iota(gX)=Y$ is equivalent to \begin{equation}
\begin{matrix}(gX)_{1i}/(gX)_{3i}=Y_{1i}\end{matrix}\label{eq:6-10a}\end{equation}

\begin{equation}
(gX)_{2i}/(gX)_{3i}=Y_{2i}\label{eq:6-10b}\end{equation}

\begin{equation}
(gX)_{4i}/(gX)_{3i}=Y_{4i}\label{eq:6-10c}\end{equation}

which is equivalent to \begin{equation}
\begin{matrix}(gX)_{3i}Y_{4i}=X_{4i}\end{matrix}\label{eq:6-11a}\end{equation}

\begin{equation}
(gX)_{2i}Y_{4i}=Y_{2i}X_{4i}\label{eq:6-11b}\end{equation}

\begin{equation}
(gX)_{1i}Y_{4i}=Y_{1i}X_{4i}\label{eq:6-11c}\end{equation}

Denoting the columns of $X$ by $(x_{i},y_{i},z_{i},t_{i})$ and the
columns of $Y$ by $(x_{i}^{\prime},y_{i}^{\prime},1,t_{i}^{\prime})$
\begin{equation}
\begin{matrix}(g_{11}x_{i}+g_{12}y_{i}+g_{13}z_{i}+g_{14}t_{i})t_{i}^{\prime}=t_{i}x_{i}^{\prime}\end{matrix}\label{eq:6-12a}\end{equation}

\begin{equation}
(g_{21}x_{i}+g_{22}y_{i}+g_{23}z_{i}+g_{24}t_{i})t_{i}^{\prime}=t_{i}y_{i}^{\prime}\label{eq:6-12b}\end{equation}

\begin{equation}
(g_{31}x_{i}+g_{32}y_{i}+g_{33}z_{i}+g_{34}t_{i})t_{i}^{\prime}=t_{i}\label{eq:6-12c}\end{equation}

It suffices to prove that a sequence $X_{i},Y_{i}$ satisfying such
equations converges to $X,Y$ also satisfying these equations. There
is a unique solution to the equations $(gX)_{1i}Y_{4i}=Y_{1i}X_{4i}$
if and only if the matrix whose rows are $(x_{i}t_{i}^{\prime},y_{i}t_{i}^{\prime},z_{i}t_{i}^{\prime},t_{i}t_{i}^{\prime},t_{i}x_{i}^{\prime})$
has rank $4$. Since this matrix is the limit of a sequence in $\tilde{M}$,
where the limit is also in $\tilde{M}$, its rank is at least $4$.
Since the determinants of all $5\times5$ minors of the matrices in
the sequence are zero, the determinants of all $5\times5$ minors
of the limit matrix are zero. Hence the rank is exactly $4$. Similarly,
there is a unique solution to the equations $(gX)_{2i}Y_{4i}=Y_{2i}X_{4i}$.
There is always a unique solution to the equations $(gX)_{3i}Y_{4i}=X_{4i}$.
Thus we have shown the existence of $g$ such that $\iota(gX)=Y$.
It remains to prove that such a $g$ is non-singular. Since $\iota$
is invertible, $\iota(gX)=\iota(Y)$ implies $gX=Y$. If $det(g)=0$
then $Y\not\in\tilde{M}$, a contradiction. 
\end{proof}
\begin{thm}
The quotient space $\iota(\tilde{M})/\sim$ is a manifold, and is
metrisable.
\end{thm}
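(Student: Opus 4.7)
My plan is to verify that $\iota(\tilde{M})/\sim$ is Hausdorff, second countable, and locally Euclidean, and then invoke Urysohn's metrization theorem.

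Hausdorffness is immediate from the preceding lemmas: Lemma \ref{openrel} shows the equivalence relation is open, Lemma \ref{pairsclosed} shows its graph is closed in $\iota(\tilde{M}) \times \iota(\tilde{M})$, so Lemma \ref{qhausdorff} yields the Hausdorff property. For second countability, $\iota(\tilde{M})$ is an open subset of the $3n$-dimensional affine slice of $M_{4,n}$ cut out by the condition that the third row equals $(1,\ldots,1)$, and hence inherits a countable basis from Euclidean space; Lemma \ref{qbasis} then transfers this basis to $\iota(\tilde{M})/\sim$.

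The main work is establishing local Euclidean structure. At each $\iota(X_0) \in \iota(\tilde{M})$ the restricted orbit is, by Theorem \ref{igxeqn}, the smooth image of the $12$-dimensional group $G'$ of matrices $g$ satisfying $g_{41}=g_{42}=g_{43}=0$ and $g_{44}=1$, acting by $g\cdot \iota(X_0) := \iota(gX_0)$ (the diagonal $d$ being determined uniquely by $(g,X_0)$). Because $\iota(\tilde{M}) \subset M$, the freeness lemma already proved in this section applies to every element of $\iota(\tilde{M})$, so the action is free and the orbit is a $12$-dimensional embedded submanifold. I would then choose a $(3n-12)$-dimensional affine slice $S$ through $\iota(X_0)$ transverse to the orbit and apply the inverse function theorem to the map $G' \times S \to \iota(\tilde{M})$, $(g,s) \mapsto g\cdot s$, obtaining a saturated open neighborhood of the orbit on which $S$ meets each orbit in exactly one point. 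Composition with the quotient projection then yields a chart around $[\iota(X_0)]$, and the transition maps between such charts are smooth because the $d$ of Theorem \ref{igxeqn} depends smoothly on $(g,X)$.

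Metrizability follows at once: a locally Euclidean Hausdorff space is locally compact and therefore regular, so by Urysohn's theorem a regular second countable space is metrizable. The delicate step is the slice construction: I must confirm that, after possibly shrinking $S$, no two distinct points of $S$ lie in the same $G'$-orbit. Infinitesimal freeness at $\iota(X_0)$ supplies this near the basepoint, and closedness of the equivalence graph (Lemma \ref{pairsclosed}) rules out orbit collisions at finite distance inside a relatively compact shrinking of $S$, so the local section is genuine and the chart is well defined.
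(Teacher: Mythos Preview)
Your argument is correct and opens with the same moves as the paper: Lemmas \ref{openrel}, \ref{qbasis}, \ref{qhausdorff}, and \ref{pairsclosed} give Hausdorffness and a countable basis for the quotient. The divergence is in how the local Euclidean structure and the metric are obtained. The paper does not build a slice directly; instead it asserts that a neighbourhood of each restricted orbit is homeomorphic to a neighbourhood in the Grassmannian $G(3,k-1)$ (appealing to freeness of the $G$-action and the fact that $X\mapsto[\iota(X)]$ is continuous and open), and then imports both the coordinate charts and an explicit metric from the Grassmannian. Your route---a transverse $(3n-12)$-dimensional affine slice, the inverse function theorem applied to $G'\times S\to\iota(\tilde M)$, and Urysohn's theorem for metrizability---is the standard slice-theorem argument and is more self-contained. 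What the paper's approach buys is a concrete local model and a specific metric rather than a bare existence statement; what yours buys is independence from the Grassmannian identification altogether. The paper in fact mentions Urysohn immediately after its proof, so your metrization step is in the spirit of that closing remark. The dimension counts agree: $\dim G(3,n-1)=3(n-4)=3n-12$, matching your $3n-12$.

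One point worth making explicit in your slice step: ruling out ``orbit collisions at finite distance'' really uses that the group element $g$ relating two equivalent points depends continuously on the pair, which is exactly what the rank-$4$ linear-algebra computation inside Lemma \ref{pairsclosed} provides. With that observation your shrinking-of-$S$ argument goes through.
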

\begin{proof}
From lemma \ref{pairsclosed} and lemma \ref{qhausdorff} we see that
$\iota(\tilde{M})$ is a Hausdorff space. From lemma \ref{openrel}
and lemma \ref{qbasis} this quotient space also has a countable basis
of open sets. Given any $X\in\tilde{M}$, there is a neighbourhood
of the restricted orbit of $\iota(X)$ homeomorphic to a neighbourhood
of the orbit of $X$ in $G(3,k-1)$, because $G$ acts freely, and
$X\mapsto[\iota(X)]$ is continuous and open. Thus $\iota(\tilde{M})/\sim$
is locally euclidean. Coordinate neighbourhoods on $\iota(\tilde{M})/\sim$
will be compatible because the corresponding neighbourhoods on $G(3,k-1)$
are compatible. Hence $\iota(\tilde{M})/\sim$ is a differentiable
manifold. We can use the metric on $G(3,k-1)$ to give us a metric
on $\iota(\tilde{M})/\sim$. 
\end{proof}
\vskip 5pt It is actually the case that a metric exists on any manifold.
This is a deep theorem called the Urysohn metrisation theorem \cite{munkres75}.

\section{Conclusion}

What we have shown is that the stereo vision group acts freely. This
is almost an example of representational consistency, but not if there
are involutions. In general a matrix involution is a diagonal matrix
surrounded on both sides by an orthogonal matrix and its transpose,
with all diagonal entries $\pm1$ but not all $1$. Thus if the stereo
vision group $G$ is split up (and neural networks do this kind of
thing) it becomes an example of representational consistency. This
is exactly like a binary string descriptor. 

Roger Penrose argues \cite{penrose89} that by use of our consciousness
we are enabled to perform actions that lie beyond any kind of computational
activity. In the sequel \cite{penrose94} Penrose exhibits a tiling
problem that is non-computable. He argues that we perform non-computational
feats when we consciously understand. The example of the stereo vision
group is a construction in this direction. 

In my lectures \cite{bhav97} I begin with geometric optics, because
that is essentially all that one needs to assume. The rest is deduced
from mathematics. It is truly remarkable that optics can reveal something
like this. 

There is much more to be said on this subject. It was reached via
Kendall's shape spaces, which offers a beautiful and vivid demonstration
of representational consistency. The optics of the eye have been studied
since ancient times, and are being modelled in great detail for example
\cite{pierscionek2005}. There is a stronger definition of representational
consistency, which I have not covered either. For this automorphisms
of free groups are of interest, a recent article \cite{newman2008}.
And it seems possible to find representational consistency in biochemistry. 

\bibliographystyle{plain}
\bibliography{BASE,PHYSIOL}

\end{document}